\title{A remark on the contactomorphism group \\
       of overtwisted contact spheres}
\author{Eduardo Fernández, Fabio Gironella}
\date{}
\@citea\NAT@hyper@{%
         \NAT@nmfmt{\NAT@nm}%
         \hyper@natlinkbreak{\NAT@aysep\NAT@spacechar}{\@citeb\@extra@b@citeb}%
         \NAT@date}}
\@citea\NAT@nmfmt{\NAT@nm}%
\NAT@spacechar\NAT@hyper@{\NAT@date}}{}{}
\@citea\NAT@hyper@{%
         \NAT@nmfmt{\NAT@nm}%
         \hyper@natlinkbreak{\NAT@spacechar\NAT@@open\if*#1*\else#1\NAT@spacechar\fi}%
           {\@citeb\@extra@b@citeb}%
         \NAT@date}}
\@citea\NAT@nmfmt{\NAT@nm}%
\fi\NAT@hyper@{\NAT@date}}
\theoremstyle{plain}
\newtheorem{thm}{Theorem}
\newtheorem{prop}[thm]{Proposition}
\newtheorem{lemma}[thm]{Lemma}
\newtheorem{cor}[thm]{Corollary}
\theoremstyle{definition}
\newtheorem{definition}{Definition}
\newtheorem{rmk}[thm]{Remark}
\newcommand{\RR}{\mathbb{R}}
\newcommand{\NN}{\mathbb{N}}
\newcommand{\DD}{\mathbb{D}}
\newcommand{\ZZ}{\mathbb{Z}}
\newcommand{\QQ}{\mathbb{Q}}
\newcommand{\SO}{\operatorname{SO}}
\newcommand{\U}{\operatorname{U}}
\newcommand{\SSS}{\mathbb{S}}
\newcommand{\xiot}{\xi_{ot}}
\newcommand{\homgp}[2]{\pi_{#1}\left(#2\right)}
\DeclareMathOperator{\Id}{Id}
\DeclareMathOperator{\Diff}{Diff}
\newcommand{\diff}[1]{\Diff_0\left(#1\right)}
\newcommand{\diffS}{\diff{\SSS^{2n+1}}}
\newcommand{\diffM}{\diff{M}}
\newcommand{\diffSxiot}{\diff{\SSS^{2n+1},\xiot}}
\newcommand{\diffSxiotEight}{\diff{\SSS^{8n+7},\xiot}}
\newcommand{\diffMxi}{\diff{M,\xi}}
\DeclareMathOperator{\Fr}{Fr}
\DeclareMathOperator{\Cont}{Cont}
\DeclareMathOperator{\AlmCont}{AlmCont}
\DeclareMathOperator{\Complex}{Complex}
\DeclareMathOperator{\OT}{OT}
\DeclareMathOperator{\Tight}{Tight}
\newcommand{\Tightk}{\Tight_k}
\newcommand{\OTk}{\OT_k}
\newcommand{\ot}[2]{\OT_{#1}\left(#2\right)}
\newcommand{\complex}[1]{\Complex\left(#1\right)}
\newcommand{\cont}[1]{\Cont\left(#1\right)}
\newcommand{\contOT}[1]{\Cont_{\OT}\left(#1\right)}
\newcommand{\Almcont}[1]{\AlmCont\left(#1\right)}
\newcommand{\almcont}[1]{\AlmCont_{0}\left(#1\right)}
\newcommand{\contMxi}{\cont{M,\xi}}
\DeclareMathOperator{\ev}{ev}
\newcommand{\evN}{\ev_N}
\DeclareMathOperator{\map}{Map}
\DeclareMathOperator{\PLEmb}{Emb_{PL}}
\newcommand{\PLemb}[1]{\PLEmb\left(#1\right)}
\newcommand{\bigslant}[2]{{\raisebox{.2em}{$#1$}\left/\raisebox{-.2em}{$#2$}\right.}}
\newcommand{\quotgps}[2]{\bigslant{\SO\left(#1\right)}{\U\left(#2\right)}}
\def\co{\colon\thinspace}
\def\coeq{\coloneqq\thinspace}
\newcommand{\Addresses}{{
  \bigskip
  \footnotesize
  E.~Fernández, \textsc{Universidad Complutense de Madrid, Departamento de \'Algebra, Geometr\'ia y Topolog\'ia, Facultad de Matem\'{a}ticas, and Instituto de Ciencias Matem\'{a}ticas CSIC-UAM-UC3M-UCM, Madrid, Spain}\par\nopagebreak
  \texttt{eduarf01@ucm.es}\par\nopagebreak
  \smallskip
  
  F.~Gironella, \textsc{Alfréd Rényi Institute of Mathematics, Budapest, Hungary}\par\nopagebreak
  \texttt{fabio.gironella@renyi.hu, fabio.gironella.math@gmail.com}\par\nopagebreak
  \smallskip
  
}}
\begin{document}

\maketitle

\begin{abstract}
    We show the existence of elements of infinite order in some homotopy groups of the contactomorphism group of overtwisted spheres. 
    It follows in particular that the contactomorphism group of some high dimensional overtwisted spheres is not homotopically equivalent to a finite dimensional Lie group.
\end{abstract}


\section{Introduction and statements of the results}
Let $(M,\xi)$ be a closed contact manifold.
These short notes are concerned with the relationship between the topology of the connected component $\diffM$ of the identity in the group of diffeomorphisms of $M$ and its subgroup $\diffMxi$ consisting of contactomorphisms of $(M,\xi)$. 
More precisely, throught the notes we will always assume contact structures to be cooriented and contactomorphisms to be coorientation--preserving.

The path components of the group of contactomorphisms of particular contact manifolds have been studied by several authors in the literature; see for instance \cite{Dymara,Giroux,DG,LZ,MN,MasGir15,Vogel,Gironella1,Gironella2}. 
Higher--order homotopy groups have also been studied: for instance, \cite{Eliashberg20years,CP,CS} contain results for the case of the standard tight $(2n+1)$--contact sphere.
In this notes, we deal with the case of overtwisted spheres (cf.\ \cite*{BEM}).

Let $(\SSS^{2n+1},\xiot)$ be any overtwisted sphere, and consider the natural inclusion 
\begin{equation*}
    i\co\diffSxiot\hookrightarrow\diffS \text{ .}
\end{equation*}
For any $k\in\NN$, denote $\mathcal{K}^{2n+1}_k$ the kernel of the homomorphism
\begin{equation*}
\homgp{k}{i}\co\homgp{k}{\diffSxiot}\rightarrow\homgp{k}{\diffS} \text{ .}
\end{equation*}

\begin{thm}\label{thm:StableHomotopy}
	Let $k\in\NN$ be such that $1\leq 4k+1 \leq 2n-1$. 
	The group $\mathcal{K}^{2n+1}_{4k+1}$ contains an infinite cyclic subgroup. 
\end{thm}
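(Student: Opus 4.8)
The plan is to flexibilise the problem via the parametric $h$-principle of Borman--Eliashberg--Murphy \cite{BEM}, replacing (up to weak homotopy equivalence) the orbit of $\xiot$ under $\diffS$ by a space of \emph{almost} contact structures, and then to detect the wanted classes by a single point-evaluation together with Bott periodicity. First, I would let $\mathcal{O}$ be the orbit of $\xiot$ under the action of $\diffS$ on cooriented contact structures on $\SSS^{2n+1}$: the orbit map is a fibration whose fibre is the stabiliser of $\xiot$, a group with identity component $\diffSxiot$. By Gray stability, $\mathcal{O}$ is the path component of $\xiot$ in the space of contact structures; by the classification in \cite{BEM}, forgetting a contact structure to its underlying almost contact structure gives a weak homotopy equivalence from $\mathcal{O}$ onto the path component of $\xiot$ in $\Almcont{\SSS^{2n+1}}$; and, $T\SSS^{2n+1}\oplus\RR$ being trivial, the latter is homotopy equivalent to a path component of $\map(\SSS^{2n+1},\quotgps{2n+2}{n+1})$. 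The long exact sequence of the fibration contains
\begin{equation*}
\homgp{4k+2}{\diffS}\xrightarrow{\ a_*\ }\homgp{4k+2}{\mathcal{O}}\xrightarrow{\ \partial\ }\homgp{4k+1}{\diffSxiot}\xrightarrow{\ \homgp{4k+1}{i}\ }\homgp{4k+1}{\diffS},
\end{equation*}
so $\mathcal{K}^{2n+1}_{4k+1}=\im\partial\cong\homgp{4k+2}{\mathcal{O}}/\im a_*$, and it suffices to find in $\homgp{4k+2}{\mathcal{O}}$ an element of infinite order modulo $\im a_*$.

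Next, fixing a point $y_0\in\SSS^{2n+1}$, I would consider the evaluation $\ev_{y_0}\co\homgp{4k+2}{\mathcal{O}}\to\homgp{4k+2}{\quotgps{2n+2}{n+1}}$ induced by the equivalences above. The hypothesis $4k+1\leq 2n-1$ gives $4k+2\leq 2n$, which lies in the range where $\quotgps{2n+2}{n+1}\to\SO/\U$ is an isomorphism on homotopy groups (the stabilisation map is $(2n+1)$-connected, as one sees from the fibration $\quotgps{2n+2}{n+1}\to\SO(2n+4)/\U(n+2)\to\SSS^{2n+2}$); since $4k+2\equiv 2\pmod 4$, Bott periodicity then gives $\homgp{4k+2}{\quotgps{2n+2}{n+1}}\cong\homgp{4k+2}{\SO/\U}\cong\ZZ$. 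To see that $\ev_{y_0}$ is surjective after tensoring with $\QQ$, I would identify the connecting homomorphism of the evaluation fibration of $\map(\SSS^{2n+1},\quotgps{2n+2}{n+1})$ over $\quotgps{2n+2}{n+1}$ with the Whitehead product against the formal class of $\xiot$ in $\homgp{2n+1}{\quotgps{2n+2}{n+1}}$ and check that this product lands in a rationally trivial homotopy group of $\quotgps{2n+2}{n+1}$ --- which is where the exact value $4k+1$ and the parity of $n$ come in.

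On the other hand, the action of a diffeomorphism on an almost contact structure is of first order: $\phi_*\xiot$ read at $y_0$ depends only on $\phi^{-1}(y_0)$ and on the derivative $d\phi_{\phi^{-1}(y_0)}$. Hence the composite of $\ev_{y_0}$ with the formal action $\diffS\to\Almcont{\SSS^{2n+1}}$ factors, up to homotopy, through the bundle over $\SSS^{2n+1}$ of orientation-preserving isomorphisms $T_x\SSS^{2n+1}\to T_{y_0}\SSS^{2n+1}$, that is, through the oriented frame bundle, which is homotopy equivalent to $\SO(2n+2)$; as $4k+2\leq 2n$ lies in the stable range and $4k+2$ is congruent to $2$ or $6$ modulo $8$, one has $\homgp{4k+2}{\SO(2n+2)}\cong\homgp{4k+2}{\SO}=0$. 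Thus $\ev_{y_0}$ annihilates $\im a_*$, and combining with the previous paragraph, $\mathcal{K}^{2n+1}_{4k+1}\otimes\QQ$ surjects onto $\homgp{4k+2}{\quotgps{2n+2}{n+1}}\otimes\QQ\cong\QQ$; hence $\mathcal{K}^{2n+1}_{4k+1}$ has an element of infinite order, and so contains an infinite cyclic subgroup.

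I expect the two points needing genuine care to be the following. First, extracting from \cite{BEM} --- whose parametric statement is made relative to a fixed overtwisted ball --- a weak homotopy equivalence between $\mathcal{O}$ and a full path component of $\Almcont{\SSS^{2n+1}}$ that is compatible with the $\diffS$-action; a clean route is probably to run the whole argument with contact structures, almost contact structures and diffeomorphisms that are fixed near $y_0$, with the overtwisted ball placed away from $y_0$. Second, the rational surjectivity of $\ev_{y_0}$ for an \emph{arbitrary} overtwisted $\xiot$: this rests on the Whitehead-product description of the connecting map of the evaluation fibration and on a short rational-homotopy computation showing that the product in question vanishes in $\homgp{\ast}{\quotgps{2n+2}{n+1}}\otimes\QQ$, using that the odd-degree rational homotopy of $\quotgps{2n+2}{n+1}$ is concentrated in a single degree that one checks is not among those that arise.
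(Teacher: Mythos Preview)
Your strategy --- long exact sequence of the orbit fibration, evaluation at a point, $\homgp{4k+2}{\Gamma_{n+1}}\cong\ZZ$ via Bott, and killing the image of $\diffS$ via $\homgp{4k+2}{\SO(2n+2)}=0$ --- is exactly the paper's. The two points you flag are the only places where the arguments diverge, and the paper resolves both more simply than you propose.

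For the first, rather than extracting a full weak equivalence $\mathcal{O}\simeq\almcont{\SSS^{2n+1}}$ from \cite*{BEM}, the paper invokes the \emph{overtwisted group} of \cite*{CPP}: the subgroup $\ot{4k+2}{\SSS^{2n+1}}\leq\homgp{4k+2}{\cont{\SSS^{2n+1}},\xiot}$ of classes admitting a parametric certificate of overtwistedness maps isomorphically onto $\homgp{4k+2}{\Almcont{\SSS^{2n+1}}}$, which is all that is needed. For the second, no Whitehead products or rational homotopy are required: all path components of $\Almcont{\SSS^{2n+1}}=\map(\SSS^{2n+1},\Gamma_{n+1})$ are homeomorphic via left multiplication by a lift $\widehat{\xi}\co\SSS^{2n+1}\to\SO(2n+2)$ of $\xi$ (which exists since $\homgp{2n}{\U(n+1)}=0$), and choosing $\widehat{\xi}(N)=\Id$ makes this homeomorphism commute with $\evN$; hence one may work on the component of the constant map $\xi_0\equiv J_0$, where constant maps provide an obvious section of $\evN$, giving surjectivity on $\pi_{4k+2}$ integrally. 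Note also that your proposed fix for the first point --- running everything relative to $y_0$ --- would render $\ev_{y_0}$ constant and so is in tension with your detection argument; the overtwisted-group route avoids this.
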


Under some conditions on the dimension, \Cref{thm:StableHomotopy} can be improved in the case of the fundamental group and the fifth homotopy group as follows:
\begin{thm}\label{thm:FundamentalGroup}
	\begin{enumerate}[label=(\roman*)]
	\item \label{item:1_thm2} The group $\mathcal{K}^{3}_1$ contains a subgroup isomorphic to $\ZZ\oplus\ZZ_2$.
	\item \label{item:2_thm2} Let $n\geq3$. The group $\mathcal{K}^{4n+1}_1$ contains a subgroup isomorphic to $\ZZ\oplus\ZZ$.
	\item \label{item:3_thm2} Let $n\geq 6$. The group $\mathcal{K}^{4n+1}_5$ contains a subgroup isomorphic to $\ZZ\oplus\ZZ$.
	\end{enumerate}
\end{thm}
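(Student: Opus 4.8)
The plan is to run the parametric $h$-principle for overtwisted contact structures through the long exact sequence of an orbit fibration, reducing the statement to homotopy group computations. Write $2N+1$ for the dimension of the sphere at hand (so $N=1$ in \ref{item:1_thm2} and $N=2n$ in \ref{item:2_thm2}--\ref{item:3_thm2}), set $W\coloneqq\quotgps{2N+2}{N+1}$ (the space of linear complex structures on $\RR^{2N+2}$), and recall that $\almcont{\SSS^{2N+1}}$ is homotopy equivalent to $\map\left(\SSS^{2N+1},W\right)$. By Gray stability the orbit of $\xiot$ under $\diff{\SSS^{2N+1}}$ equals its path component $\mathcal{C}$ among contact structures, and by the parametric $h$-principle of \cite{BEM} this $\mathcal{C}$ is weakly homotopy equivalent to the path component $\mathcal{F}\subset\map\left(\SSS^{2N+1},W\right)$ of the formal datum of $\xiot$, a component determined by a class $\phi\in\homgp{2N+1}{W}$. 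The orbit map fits into a fibration $\diff{\SSS^{2N+1},\xiot}\to\diff{\SSS^{2N+1}}\to\mathcal{C}$, and a diagram chase in its exact sequence gives an isomorphism of abelian groups $\mathcal{K}^{2N+1}_k\cong\operatorname{coker}\!\left(\homgp{k+1}{\diff{\SSS^{2N+1}}}\to\homgp{k+1}{\mathcal{F}}\right)$ --- valid also for $k=1$, although $\homgp{1}{\diff{\SSS^{2N+1},\xiot}}$ need not be abelian. It thus suffices to show that the source group is finite and that $\homgp{k+1}{\mathcal{F}}$ has free rank at least $2$ (and, for \ref{item:1_thm2}, contains a subgroup isomorphic to $\ZZ_2$).

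For $\homgp{k+1}{\mathcal{F}}$ I would use the evaluation fibration $\Omega^{2N+1}_\phi W\to\mathcal{F}\to W$, whose connecting homomorphism $\homgp{j}{W}\to\homgp{j-1}{\Omega^{2N+1}_\phi W}\cong\homgp{j+2N}{W}$ is, by a classical computation, the Whitehead product with $\phi$. This produces a short exact sequence expressing $\homgp{k+1}{\mathcal{F}}$ through $\homgp{k+1}{W},\homgp{k+2}{W}$ and the ``high'' groups $\homgp{k+1+2N}{W},\homgp{k+2+2N}{W}$. The first two are stable and given by Bott periodicity: in \ref{item:2_thm2} they are $\homgp{2}{W}=\ZZ$ and $\homgp{3}{W}=0$, and in \ref{item:3_thm2} they are $\homgp{6}{W}=\ZZ$ and $\homgp{7}{W}=\ZZ_2$. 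The high groups --- $\homgp{4n+2}{W},\homgp{4n+3}{W}$ for \ref{item:2_thm2} and $\homgp{4n+6}{W},\homgp{4n+7}{W}$ for \ref{item:3_thm2} --- lie just above the stable range of $W=\quotgps{4n+2}{2n+1}$, and I would pin down their free ranks by descending along the tower of sphere fibrations $\quotgps{2m}{m}\to\quotgps{2m+2}{m+1}\to\SSS^{2m}$, using that the ``Euler'' maps $\homgp{2m}{\quotgps{2m+2}{m+1}}\to\homgp{2m}{\SSS^{2m}}$ are nonzero and that the positive stable stems through degree $6$ are finite. The expected outcome is that $\homgp{4n+3}{W}$ (resp.\ $\homgp{4n+7}{W}$) has free rank $\ge 1$ while $\homgp{4n+2}{W}$ (resp.\ $\homgp{4n+6}{W}$) is finite; the short exact sequence then contributes one $\ZZ$ from the kernel of $[\,\cdot\,,\phi]$ on the stable group $\homgp{2}{W}$ (resp.\ $\homgp{6}{W}$) and a second $\ZZ$ from the cokernel of $[\,\cdot\,,\phi]$ into $\homgp{4n+3}{W}$ (resp.\ $\homgp{4n+7}{W}$), uniformly in $\phi$. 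Case \ref{item:1_thm2} is a direct computation in $\homgp{*}{\SSS^2}$: since $[\eta,\eta]=0$ --- it is the image under $\eta_*$ of the vanishing Whitehead square $[\iota_3,\iota_3]$ --- and $[\iota_2,\eta]$ is $2$-torsion, the sequence reduces to $0\to\ZZ_2\to\homgp{2}{\mathcal{F}}\to\ZZ\to 0$, which splits, giving $\homgp{2}{\mathcal{F}}\cong\ZZ\oplus\ZZ_2$ for all $\phi$.

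For the source group I would use $\homgp{k+1}{\diff{\SSS^{2N+1}}}\cong\homgp{k+1}{\SO(2N+2)}\oplus\homgp{k+1}{\Diff_\partial\left(D^{2N+1}\right)}$. In \ref{item:2_thm2}--\ref{item:3_thm2} the first summand vanishes ($\homgp{2}{\SO(4n+2)}=0$ always and $\homgp{6}{\SO(4n+2)}=\homgp{6}{\SO}=0$ in the stable range), and the second is finite by pseudoisotopy theory: Igusa's stability theorem identifies $\homgp{i}{\Diff_\partial\left(D^m\right)}$ in a range with a stable invariant, which by the Farrell--Hsiang computation is rationally trivial in degrees $\not\equiv 3\pmod 4$ --- in particular in degrees $2$ and $6$ --- when $m$ is odd; the concordance stable range makes $\homgp{2}{\Diff_\partial\left(D^{4n+1}\right)}$ finite for $n\ge 3$ and $\homgp{6}{\Diff_\partial\left(D^{4n+1}\right)}$ finite for $n\ge 6$, which is exactly where the dimensional hypotheses come from. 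In \ref{item:1_thm2} one uses Hatcher's theorem $\diff{\SSS^3}\simeq\SO(4)$, so $\homgp{2}{\diff{\SSS^3}}=0$. Combining, $\mathcal{K}^{2N+1}_k$ is the cokernel of a homomorphism from a finite group to a finitely generated abelian group of free rank $\ge 2$, hence contains $\ZZ\oplus\ZZ$; in \ref{item:1_thm2} it is in fact isomorphic to $\ZZ\oplus\ZZ_2$.

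The hard part is the second step: controlling the metastable homotopy of $\quotgps{4n+2}{2n+1}$ in the four exact degrees $4n+2,4n+3,4n+6,4n+7$ uniformly in $n$. Concretely, one must verify that the boundary maps in the sphere-bundle tower $\quotgps{2m}{m}\to\quotgps{2m+2}{m+1}\to\SSS^{2m}$ behave as claimed --- so that the a priori free summand of $\homgp{4n+2}{W}$, resp.\ $\homgp{4n+6}{W}$, is genuinely absorbed --- and to keep track of which stable stem enters at each descent step. The remaining ingredients --- the $h$-principle, the evaluation fibration, Hatcher's theorem, and the finiteness of $\homgp{*}{\Diff_\partial\left(D^m\right)}$ in the relevant degrees --- are imported essentially as black boxes.
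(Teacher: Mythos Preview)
Your overall strategy---the orbit fibration, the reduction to $\homgp{*}{\Gamma_{N+1}}$, and the Farrell--Hsiang/Hatcher input for $\homgp{*}{\diff{\SSS^{2N+1}}}$---matches the paper's. There is, however, a genuine gap in how you invoke the $h$-principle. The theorem of \cite{BEM} asserts that $\contOT{M,\Delta_0}\to\Almcont{M,\Delta_0}$ is a weak equivalence \emph{relative to a fixed overtwisted disk} $\Delta_0$; it does \emph{not} give a weak equivalence $\mathcal{C}\simeq\mathcal{F}$ between the unconstrained path components. The paper makes exactly this point and, following \cite{CPP}, uses the overtwisted group $\OTk$ instead: one only knows that $\homgp{k}{\mathcal{F}}\cong\OTk$ sits as a direct summand of $\homgp{k}{\mathcal{C}}$, with a possibly non-trivial ``tight'' complement---Vogel's example shows this complement can be non-zero. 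Hence your identification $\mathcal{K}^{2N+1}_k\cong\operatorname{coker}\bigl(\homgp{k+1}{\diff{\SSS^{2N+1}}}\to\homgp{k+1}{\mathcal{F}}\bigr)$ is unjustified, and your closing assertion that $\mathcal{K}^3_1$ is \emph{isomorphic} to $\ZZ\oplus\ZZ_2$ is an overclaim. The patch is easy: the forgetful map $\homgp{k+1}{\mathcal{C}}\to\homgp{k+1}{\mathcal{F}}$ is split surjective, so $\mathcal{K}^{2N+1}_k$ surjects onto the cokernel you compute and therefore still contains the desired subgroups.

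A second, less serious point: your Whitehead-product analysis of the evaluation fibration on the component of $\phi$ is correct in outline but needlessly delicate, and it forces you to control the extra metastable groups $\homgp{4n+2}{\Gamma_{2n+1}}$ and $\homgp{4n+6}{\Gamma_{2n+1}}$. The paper avoids this entirely by first proving (\Cref{lemma:PathComponentsHomeo}) that all path components of $\Almcont{\SSS^{2N+1}}$ are homeomorphic---using the transitive $\SO(2N+2)$-action on $\Gamma_{N+1}$ together with surjectivity of $\homgp{2N+1}{\SO(2N+2)}\to\homgp{2N+1}{\Gamma_{N+1}}$---and then working in the component of the constant map, where the evaluation fibration has the obvious section by constants and splits as $\homgp{k}{\mathcal{F}}\cong\homgp{k}{\Gamma_{N+1}}\oplus\homgp{2N+1+k}{\Gamma_{N+1}}$. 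This removes all dependence on $\phi$ and on the two extra degrees; only $\homgp{4n+3}{\Gamma_{2n+1}}$ and $\homgp{4n+7}{\Gamma_{2n+1}}$ need to contain an infinite cyclic subgroup, which the paper simply cites from \cite{Massey,Harris,Kachi,Mukai} rather than reproving via the sphere-bundle tower.
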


From the methods developed in the paper we are also able to show the following

\begin{thm}\label{thm:pi3}
	\begin{itemize}
		\item [(i)] Let $n\geq 4$. The group
		$\mathcal{K}^{4n+3}_3$ contains an infinite cyclic subgroup.
		\item [(ii)]  Let $n\geq 2$. The group $\mathcal{K}^{8n+7}_4$ contains an infinite cyclic subgroup.
	\end{itemize}
\end{thm}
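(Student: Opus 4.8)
\emph{Strategy.} The argument follows the pattern of \Cref{thm:StableHomotopy}, now carried out in the \emph{unstable} range; parts (i) and (ii) are parallel.

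\emph{Reduction to almost contact data.} As in the proof of \Cref{thm:StableHomotopy}, the parametric (and relative) Borman--Eliashberg--Murphy $h$-principle identifies the component of $\xiot$ in the space of overtwisted contact structures on $\SSS^{m}$ with the corresponding component $\mathcal{A}$ of the space of almost contact structures; together with Gray stability this produces a fibration sequence $\diff{\SSS^{m},\xiot}\to\diff{\SSS^{m}}\to\mathcal{A}$, whose long exact sequence gives a natural isomorphism
\[
  \mathcal{K}^{m}_{k}\;\cong\;\operatorname{coker}\!\Bigl(\homgp{k+1}{\diff{\SSS^{m}}}\longrightarrow\homgp{k+1}{\mathcal{A}}\Bigr).
\]
As in the earlier arguments one may equally well work with the variant of $\mathcal{A}$ relative to a Darboux ball; the class produced below is already visible there, and it maps nontrivially to $\mathcal{K}^{m}_{k}$.

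\emph{A mapping-space model and an unstable class.} Since $T\SSS^{m}\oplus\underline{\RR}$ is trivial, $\mathcal{A}$ is a component of $\map\bigl(\SSS^{m},\quotgps{m+1}{(m+1)/2}\bigr)$, so that $\homgp{*}{\mathcal{A}}$ is assembled from $\homgp{*}{\quotgps{m+1}{(m+1)/2}}$ and $\homgp{*+m}{\quotgps{m+1}{(m+1)/2}}$, and it is a class of the latter type, lying above the stable range, that we exploit. To locate it, run the long exact sequence of $\U(N)\to\SO(2N)\to\quotgps{2N}{N}$ together with the Borel presentation $H^{*}\bigl(\quotgps{2N}{N};\QQ\bigr)\cong\QQ[c_{1},\dots,c_{N}]/(p_{1},\dots,p_{N-1},c_{N})$: this ring is a complete intersection and $\quotgps{2N}{N}$ is formal, so its rational homotopy is free, with one generator of degree $\deg(p_{i})-1$ for each Pontryagin relation; a Pontryagin class of $\SO(2N)$ of degree $>2N-1$ is not pulled back from $\U(N)$ and hence contributes an infinite cyclic summand in that degree. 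For part (i), with $m=4n+3$ and $\quotgps{m+1}{(m+1)/2}=\quotgps{4n+4}{2n+2}$, the class of $p_{n+2}$, of degree $4n+7>2(2n+2)-1$, yields $\homgp{4n+7}{\quotgps{4n+4}{2n+2}}\otimes\QQ\cong\QQ$; since $4n+7=(k+1)+m$ with $k=3$, this is an infinite cyclic subgroup of $\homgp{k+1}{\mathcal{A}}$. For part (ii), with $m=8n+7$, the class of $p_{2n+3}$, of degree $8n+11>2(4n+4)-1$, yields an infinite cyclic subgroup of $\homgp{8n+11}{\quotgps{8n+8}{4n+4}}$, which enters $\mathcal{K}^{8n+7}_{4}$ through the same long exact sequence. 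The hypotheses $n\ge 4$, resp.\ $n\ge 2$, are precisely what make the relevant Pontryagin class available while keeping the degrees in the range needed for the last step.

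\emph{Survival in $\mathcal{K}$, and the main difficulty.} It remains to see that this infinite cyclic subgroup is not annihilated by the image of $\homgp{k+1}{\diff{\SSS^{m}}}$. Up to homotopy the map $\diff{\SSS^{m}}\to\mathcal{A}$ is the composite of the derivative (Gauss) map $\diff{\SSS^{m}}\to\map\bigl(\SSS^{m},\SO(m+1)\bigr)$ with $\SO(m+1)\to\quotgps{m+1}{(m+1)/2}$, so the relevant image is contained in that of $\homgp{k+1}{\map(\SSS^{m},\SO(m+1))}$, and one must check that the summand of this group which could reach our class lands in the ``Chern'' part of $\homgp{*}{\quotgps{2N}{N}}$ rather than in the ``Pontryagin-relation'' part that detects it. This is where the dimension hypotheses genuinely enter: the image of $\SO(m+1)$ contributes nothing rationally in the relevant position, and in the stated ranges the remaining (pseudoisotopy-type) rational homotopy of $\diff{\SSS^{m}}$, controlled by Farrell--Hsiang and block-diffeomorphism theory, does not hit the detecting factor. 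Granting this, the cokernel above still contains an infinite cyclic subgroup, proving (i) and (ii). The hard part is exactly this last step: isolating the correct \emph{unstable} rational homotopy class of $\quotgps{m+1}{(m+1)/2}$ and ruling out its cancellation against the (genuinely hard to compute) rational homotopy of $\diff{\SSS^{m}}$.
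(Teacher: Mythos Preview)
Your global strategy matches the paper's: use the fibration $\diff{\SSS^m,\xiot}\to\diff{\SSS^m}\to\Cont(\SSS^m,\xiot)$, replace the base by almost contact structures via the overtwisted $h$--principle, model these as $\map(\SSS^m,\Gamma_{(m+1)/2})$, and locate an infinite cyclic class in $\pi_{k+1}$ of the base that survives to the cokernel. Two points deserve correction.

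\textbf{The survival step is overcomplicated.} Your argument that the image of $\pi_{k+1}(\diff{\SSS^m})$ misses the class is indirect (factor through a Gauss map, separate ``Chern'' from ``Pontryagin--relation'' summands) and, as you yourself write, not actually carried out. The paper does something much cleaner: it shows $\pi_{k+1}(\diff{\SSS^m})\otimes\QQ=0$ outright. Via $\diff{\SSS^m}\simeq\diff{\DD^m,\partial}\times\SO(m+1)$, Bott periodicity gives $\pi_4(\SO(4n+4))=0$ and $\pi_5(\SO(8n+8))=0$, while Farrell--Hsiang gives $\pi_4(\diff{\DD^{4n+3},\partial})\otimes\QQ=0$ for $n\ge4$ and $\pi_5(\diff{\DD^{8n+7},\partial})\otimes\QQ=0$ for $n\ge2$; these bounds are exactly the hypotheses in the statement. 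Once the source is rationally trivial, \emph{any} infinite cyclic subgroup of $\pi_{k+1}(\Cont)$ survives, and no discussion of where the image lands is needed.

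\textbf{A degree error in part (ii).} The unstable summand of $\pi_{k+1}(\mathcal{A})$ is $\pi_{(k+1)+m}(\Gamma_{(m+1)/2})$. For (i) this is $\pi_{4n+7}(\Gamma_{2n+2})$, and your odd generator dual to the relation $p_{n+2}$ does sit there, in agreement with the paper (which simply cites the literature for this). For (ii), however, $(k+1)+m=5+(8n+7)=8n+12$, not $8n+11$: the class dual to $p_{2n+3}$ that your formality argument produces lies in $\pi_{8n+11}(\Gamma_{4n+4})$ and hence contributes to $\pi_4(\mathcal{A})$, not to $\pi_5(\mathcal{A})$. So as written your argument for (ii) does not place a class in the required degree. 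The paper instead appeals directly to the computation of $\pi_{8n+12}(\Gamma_{4n+4})$ in the cited references on unstable homotopy of $\SO/\U$.
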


As the even--order higher homotopy groups of a finite dimensional Lie group are finite (see for instance \citet*[Example 2.51]{FOT}), \Cref{thm:pi3} immediately implies:
\begin{cor}
    \label{cor:not_lie_group}
    For $n\geq2$, $\diffSxiotEight$ is not homotopy equivalent to a finite dimensional Lie group.
\end{cor}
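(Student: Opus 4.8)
The plan is to deduce the corollary immediately from Theorem~\ref{thm:pi3}(ii) together with the fact recalled just before the statement: the homotopy groups $\homgp{2j}{G}$, $j\geq 1$, of a finite dimensional Lie group $G$ are finite (see \citet*[Example 2.51]{FOT}). For completeness I would recall the idea behind this last fact: since $\diffSxiotEight$ is connected, so would be any $G$ homotopy equivalent to it, and a connected finite dimensional Lie group is diffeomorphic to the product of a maximal compact subgroup $K$ with a Euclidean space, hence homotopy equivalent to $K$; rationally $K$ is a product of odd-dimensional spheres, and $K$ has finitely generated homotopy groups, so $\homgp{2j}{G}\cong\homgp{2j}{K}$ is finite for every $j\geq 1$.

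Granting this, the argument is short. By the definition given in the introduction, $\mathcal{K}^{8n+7}_4$ is a subgroup of $\homgp{4}{\diffSxiotEight}$, and by Theorem~\ref{thm:pi3}(ii) it contains an infinite cyclic subgroup whenever $n\geq 2$; hence $\homgp{4}{\diffSxiotEight}$ is infinite. If $\diffSxiotEight$ were homotopy equivalent to a finite dimensional Lie group $G$, then $\homgp{4}{G}\cong\homgp{4}{\diffSxiotEight}$ would be infinite, contradicting the finiteness statement above applied with $2j=4$. Therefore no such $G$ exists, which is the claim.

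The only point to be careful about — and the reason Theorem~\ref{thm:pi3}(ii) is the relevant input rather than the other results of the paper — is that the obstruction must live in \emph{even} degree: the $\pi_1$, $\pi_3$ and $\pi_5$ statements of Theorems~\ref{thm:StableHomotopy}, \ref{thm:FundamentalGroup} and \ref{thm:pi3}(i) produce infinite homotopy groups in odd degrees, where a finite dimensional Lie group may perfectly well have infinite homotopy (already $\homgp{3}{\SO(3)}\cong\ZZ$), so they cannot by themselves rule out homotopy equivalence to a Lie group. Beyond this observation there is no genuine obstacle: all the content is in Theorem~\ref{thm:pi3}.
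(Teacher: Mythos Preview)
Your argument is correct and matches the paper's own reasoning exactly: the corollary is stated as an immediate consequence of \Cref{thm:pi3}(ii) together with the finiteness of even-degree homotopy groups of finite dimensional Lie groups. Your added explanation of the Lie-group fact and the remark on why only the $\pi_4$ statement is relevant are welcome elaborations but do not change the approach.
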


The proofs of Theorems \ref{thm:StableHomotopy}, \ref{thm:FundamentalGroup} and \ref{thm:pi3} use four main ingredients.
The first is the notion of \emph{overtwisted group} introduced in \cite*{CPP}, which relies on the flexibility results for overtwisted contact manifolds from \cite*{EliashbergOT,BEM}. 
The second is the existence of a long exact sequence relating the homotopy groups of the space of contact structures on $\SSS^{2n+1}$ to those of $\diffSxiot$ and of $\diffS$; see \Cref{sec:long_exact_seq}.
The last ingredients are the description of the rational homotopy groups of $\diff{\SSS^{2n+1}}$ from \cite{FH} and the description of some homotopy groups of the homogeneous space $\Gamma_{n}=\SO(2n)/U(n)$ from \cite{Bott,Massey,Harris,Kachi,Mukai}.


We point out that these methods could also be applied to the case of any overtwisted contact manifold $(M^{2n+1},\xi)$ such that both the homotopy type of the space of almost contact structures on $M$ and the diffeomorphism group of $M$ can be (at least partially) understood.

\paragraph*{Acknowledgments}
The authors are extremely grateful to Fran Presas for explaining them the construction of the overtwisted  group and encouraging them to write down this note, as well as to Javier Martínez Aguinaga for very interesting discussions on the problem.
The first author is supported by the Spanish Research Projects SEV--2015--0554, MTM2016--79400--P, and MTM2015--72876--EXP as well as by a Beca de Personal
Investigador en Formaci\'on UCM.
The second author is supported by the grant NKFIH KKP 126683.


\section{Preliminaries}


\subsection{A long exact sequence of homotopy groups}
\label{sec:long_exact_seq}
Let $(M,\xi)$ be a closed contact manifold. 
In this section, the spaces $\diffM$ and $\diffMxi$ are be considered as pointed spaces, with base point $\Id$.
Similarly, $\contMxi$ is considered with base point $\xi$.

As shown for instance in \cite{MasGir15} (and, more in detail, in \cite{MasFibrNotes}), the natural map
\begin{center}
	$\begin{array}{rccl}
	\diffM& \longrightarrow &  \contMxi\\
     \varphi& \longmapsto & \varphi_* \xi
	\end{array}$
\end{center}
is a locally--trivial fibration with fiber $\diffMxi$; 
see also \cite{GeiGon04} for a proof of the fact that the map is a Serre fibration (which is enough for what follows).
In particular, it induces a long exact sequence of homotopy groups
\begin{equation}\label{eq:ExactSequence}
    \ldots \rightarrow \homgp{k+1}{\contMxi}\rightarrow \homgp{k}{\diffMxi} \rightarrow \homgp{k}{\diffM}\rightarrow \homgp{k}{\contMxi}\rightarrow\ldots
\end{equation}


\subsection{Almost contact structures on $\SSS^{2n+1}$}

Recall that, given an oriented smooth manifold $M^{2n+1}$, an \em almost contact structure \em is a triple $(\xi,J,R)$, where $\xi\subseteq TM$ is a cooriented hyperplane distribution, $J:\xi\rightarrow \xi$ is a complex structure on $\xi$, $R=\langle v \rangle \subseteq TM$ is a trivial line sub--bundle defining the coorientation of $\xi$ and $\xi\oplus R\cong TM$ as oriented vector bundles. 
Fixing an auxiliary Riemannian metric $g$ on $M$ which is adapted to $J$ and such that $w$ is of norm $1$ and orthogonal to $\xi$, one can see that $(\xi,J,R)$ is equivalent to a reduction of the structure group $\SO(2n+1)$ of the principal bundle $\Fr_{\SO}(M)$ of orthonormal oriented frames of $TM$ to its subgroup $\U(n)=\U(n)\times 1\subseteq \SO(2n+1)$. 
The space of such reductions is the space of sections $\Gamma(M;X)$ of a fiber bundle $\pi \co X=\Fr_{\SO}(M)/\U(n) \rightarrow M$, with typical fiber $\SO(2n+1)/\U(n)$. 
Such space $\Gamma(M;X)$ is naturally identified with the space of almost contact structures on $M$, which we denote $\AlmCont(M)$.

Recall also (see \citet*[Lemma 8.2.1]{GeiBook}) that there is an identification \begin{equation}\label{eq:GammaGroup}
\Gamma_{n+1}\coeq \quotgps{2n+2}{n+1}\simeq\quotgps{2n+1}{n} \text{ .}
\end{equation} 
In particular, the fiber bundle $\pi$ can also be seen as a fibration
\begin{equation}\label{eq:AlmContFib}
	\begin{tikzcd}
		\Gamma_{n+1}  \ar[r, hook] &
		X \ar[d, "\pi"] \\
		& 
		M
	\end{tikzcd}
\end{equation}

Denote the trivial real line bundle over $M$ by $\varepsilon=\langle w \rangle $.
Then, the Riemannian metric $g$ on $M$ naturally extends to a metric on $TM\oplus\varepsilon$, still denoted $g$, by declaring the vector $w$ to be orthogonal to $TM$ and of norm $1$. 
Let now $\complex{TM\oplus\varepsilon}$ be the space of complex structures on the oriented bundle $TM\oplus\varepsilon$, which are compatible with the metric $g$ (i.e.\ $g(J.,J.)=g(.,.)$). 
Notice that this space can be identified with the space of sections of a fiber bundle over $M$ with fiber the space of complex structures on $\RR^{2n+2}$ compatible with the standard metric, i.e.\ $\Gamma_{n+1}$. 

Given any almost contact structure $(\xi,J,R)$, one can naturally extend $J$ to a complex structure  $\tilde{J}:TM\oplus\varepsilon\rightarrow TM\oplus\varepsilon$ on $TM\oplus\varepsilon$, by defining $\tilde{J}v=-w$. 
This gives an inclusion $j\co\AlmCont(M)\hookrightarrow\Complex(TM\oplus\varepsilon)$. 

In fact, \Cref{eq:GammaGroup} says that $i$ is a diffeomorphism. 
More precisely, denoting the projection on the first factor by $pr:TM\oplus \varepsilon\rightarrow TM$, the map 	
\begin{equation*}
		\begin{array}{rccl}
		\Phi\colon & \Complex(TM\oplus\varepsilon^1) & \longrightarrow &  \AlmCont(M) \\
		& J& \longmapsto & (TM\cap J (TM), J\vert_{TM\cap J (TM)},\langle pr(Jw)\rangle)
		\end{array}
\end{equation*}
is the inverse of $i$.
As a consequence: 
\begin{lemma}\label{lem:AlmostContactFibrationTrivial}
If the vector bundle $TM$ is stably trivial of type $1$ over $\RR$, i.e. $TM\oplus\varepsilon$ is trivializable (as real vector bundle), the fiber bundle $\pi\co X \to M$ is trivializable.
\end{lemma}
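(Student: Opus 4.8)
The plan is to deduce the triviality of $\pi\co X\to M$ from that of the oriented orthonormal frame bundle of $TM\oplus\varepsilon$, using the diffeomorphism $\Phi$ constructed above to replace the bundle built out of $TM$ by one built out of $TM\oplus\varepsilon$, which is trivial by hypothesis.

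First I would promote $\Phi$ from a homeomorphism of section spaces to an isomorphism of fiber bundles over $M$. Let $\mathcal{J}\to M$ denote the fiber bundle whose space of sections is $\complex{TM\oplus\varepsilon}$, i.e.\ the bundle whose fiber over $p\in M$ is the space of $g$-compatible complex structures on $(TM\oplus\varepsilon)_p$ inducing its orientation; its typical fiber is $\Gamma_{n+1}$. The formulas defining $j$ and $\Phi$ are pointwise in $p$ --- $j$ sends $(\xi,J,R)|_p$ to the complex structure on $(TM\oplus\varepsilon)_p$ extending $J|_p$ by $v\mapsto -w$, and $\Phi$ inverts this fiberwise --- so they define bundle maps $X\to\mathcal{J}$ and $\mathcal{J}\to X$ covering the identity of $M$, which are mutually inverse on each fiber by \eqref{eq:GammaGroup} and hence isomorphisms of fiber bundles. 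Therefore $\pi\co X\to M$ is trivializable if and only if $\mathcal{J}\to M$ is.

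Next I would observe that $\mathcal{J}$ is the bundle associated to the principal $\SO(2n+2)$-bundle $\Fr_{\SO}(TM\oplus\varepsilon)$ of oriented $g$-orthonormal frames of $TM\oplus\varepsilon$ via the left-translation action of $\SO(2n+2)$ on its fiber $\Gamma_{n+1}=\quotgps{2n+2}{n+1}$ (cf.\ \eqref{eq:GammaGroup}): an oriented orthonormal frame at $p$ identifies $(TM\oplus\varepsilon)_p$ with $\RR^{2n+2}$ together with its metric and orientation, hence identifies the fiber $\mathcal{J}_p$ with $\quotgps{2n+2}{n+1}$ compatibly with the structure group. Consequently $\mathcal{J}$ is trivial as a fiber bundle as soon as $\Fr_{\SO}(TM\oplus\varepsilon)$ is trivial as a principal bundle; and the latter holds under our hypothesis, because a global frame of the real vector bundle $TM\oplus\varepsilon$ can be made positively oriented over each connected component of $M$ (transposing two of its vectors if needed) and then $g$-orthonormal by fiberwise Gram--Schmidt, yielding a global section of $\Fr_{\SO}(TM\oplus\varepsilon)$. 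Stringing the three steps together proves the lemma.

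The step requiring the most care is the first one, but it is immediate once one notes that $j$, $\Phi$ and the identification \eqref{eq:GammaGroup} are all defined fiberwise over $M$; the remaining two steps are standard associated-bundle manipulations.
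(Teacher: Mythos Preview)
Your argument is correct and follows exactly the line the paper intends: the lemma is stated there as an immediate consequence of the fiberwise identification $\Phi$ between $X$ and the bundle of $g$--compatible complex structures on $TM\oplus\varepsilon$, and you have simply spelled out the associated--bundle step that the paper leaves implicit. There is no substantive difference in approach.
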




For the rest of the section we focus on the case of almost contact structures on $\SSS^{2n+1}$.

According to \Cref{lem:AlmostContactFibrationTrivial}, the fiber bundle $\pi \co X\rightarrow \SSS^{2n+1}$ is trivial. 
Once fixed any trivialization, one can then identify $\AlmCont(\SSS^{2n+1})=\map(\SSS^{2n+1},\Gamma_{n+1})$. 

\begin{rmk}
\label{rmk:HomotopyGamma}
	The homotopy groups $\pi_k (\Gamma_{n+1})$, in the stable range $1\leq k\leq 2n$, were computed in \cite{Bott}: they are of period $8$ and the first eight groups are, in order, $0,\ZZ,0,0,0,\ZZ,\ZZ_2,\ZZ_2$.
	Moreover, some of the first unstable groups $\homgp{2n+1+k}{\Gamma_{n+1}}$ were computed in \cite{Massey,Harris,Kachi,Mukai}. 
	More precisely, we will use the fact that the following unstable homotopy groups contain a cyclic subgroup:
	$\homgp{4n+3}{\Gamma_{2n+1}}$, $\homgp{4n+7}{\Gamma_{2n+1}}$, 	$\homgp{4n+7}{\Gamma_{2n+2}}$ and $\homgp{8n+12}{\Gamma_{4n+4}}$.
\end{rmk}


\begin{lemma}\label{lemma:PathComponentsHomeo}
	All the path connected components of the space $\AlmCont(\SSS^{2n+1})$ are homeomorphic.
\end{lemma}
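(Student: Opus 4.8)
The plan is to find a topological group acting on $\AlmCont(\SSS^{2n+1})$ by homeomorphisms and permuting its path components transitively; since any self-homeomorphism of a space maps each path component onto a path component and restricts there to a homeomorphism, this at once yields the statement. Fixing a trivialization of the bundle $\pi\co X\to\SSS^{2n+1}$ provided by \Cref{lem:AlmostContactFibrationTrivial}, identify $\AlmCont(\SSS^{2n+1})$ with $\map(\SSS^{2n+1},\Gamma_{n+1})$, where $\Gamma_{n+1}=\SO(2n+2)/\U(n+1)$ by \eqref{eq:GammaGroup}. The \emph{gauge group} $\mathcal G\coeq\map(\SSS^{2n+1},\SO(2n+2))$, a topological group under pointwise multiplication, then acts continuously on $\map(\SSS^{2n+1},\Gamma_{n+1})$ via $(g\cdot f)(x)=g(x)\cdot f(x)$, using the transitive left action of $\SO(2n+2)$ on the homogeneous space $\Gamma_{n+1}$; for every fixed $g$ this is a homeomorphism of $\map(\SSS^{2n+1},\Gamma_{n+1})$, with inverse induced by $g^{-1}$. (Under the identification of $\Gamma_{n+1}$ with the space of compatible complex structures on $\RR^{2n+2}$, this is just fibrewise conjugation of complex structures by orthogonal maps.)

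First I would reduce the lemma to the claim that the orbit of the constant map $c$ at a base point $o\in\Gamma_{n+1}$ meets every path component of $\map(\SSS^{2n+1},\Gamma_{n+1})$: indeed, if some $f$ in a given component equals $g\cdot c$, then the homeomorphism $h\mapsto g\cdot h$ carries the component of $c$ onto that of $f$. Now $g\cdot c$ is the map $x\mapsto \mathrm p(g(x))$, where $\mathrm p\co\SO(2n+2)\to\Gamma_{n+1}$ is the quotient projection, so the orbit of $c$ is precisely the set of maps $\SSS^{2n+1}\to\Gamma_{n+1}$ that lift through $\mathrm p$. Since $\mathrm p$ is a fibre bundle (quotient by the closed subgroup $\U(n+1)$), it has the homotopy lifting property, so liftability through $\mathrm p$ depends only on the homotopy class of the map; hence the orbit of $c$ is a union of path components, and it is all of $\map(\SSS^{2n+1},\Gamma_{n+1})$ precisely when the induced map $[\SSS^{2n+1},\SO(2n+2)]\to[\SSS^{2n+1},\Gamma_{n+1}]$ is onto.

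What is left is to prove this surjectivity, the only genuinely computational point. From the homotopy exact sequence of $\U(n+1)\to\SO(2n+2)\xrightarrow{\mathrm p}\Gamma_{n+1}$, the map $\pi_1(\U(n+1))\to\pi_1(\SO(2n+2))$ is onto, so $\Gamma_{n+1}$ is simply connected and $[\SSS^{2n+1},\Gamma_{n+1}]=\homgp{2n+1}{\Gamma_{n+1}}$; it therefore suffices that $\homgp{2n+1}{\SO(2n+2)}\to\homgp{2n+1}{\Gamma_{n+1}}$ be surjective, which, again by that exact sequence, holds as soon as the connecting homomorphism $\homgp{2n+1}{\Gamma_{n+1}}\to\homgp{2n}{\U(n+1)}$ vanishes. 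But $2n$ is even and in the stable range for $\U(n+1)$, so $\homgp{2n}{\U(n+1)}\cong\homgp{2n}{\U}=0$ by Bott periodicity \cite{Bott}, and the reduction is complete. The main obstacle is really just the conceptual one of singling out the gauge group as the relevant symmetry group and recognizing that transitivity on $\pi_0$ boils down to this vanishing; everything else is formal.
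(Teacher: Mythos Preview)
Your proof is correct and follows essentially the same approach as the paper's: both use the pointwise action of $\map(\SSS^{2n+1},\SO(2n+2))$ on $\map(\SSS^{2n+1},\Gamma_{n+1})$, and both reduce the transitivity on path components to the surjectivity of $\homgp{2n+1}{\SO(2n+2)}\to\homgp{2n+1}{\Gamma_{n+1}}$, obtained from $\homgp{2n}{\U(n+1)}=0$ via Bott periodicity. The only difference is packaging: the paper picks a representative $\xi$ in a given component, lifts it to $\widehat{\xi}$, and writes down the explicit homeomorphism $\eta\mapsto\widehat{\xi}\cdot\eta$, whereas you phrase the same construction as ``the gauge group acts transitively on $\pi_0$''.
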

\begin{proof}
    Let $J_0\in\Gamma_{n+1}$ be the standard (almost) complex structure on $\RR^{2n+2}$, and denote
    \begin{align*}
        \xi_0\co \SSS^{2n+1} & \to\Gamma_{n+1} \\
        z & \mapsto J_0
    \end{align*}
    the almost contact structure with constant value $J_0$. 
    Consider then any other almost contact structure $\xi:\SSS^{2n+1}\rightarrow\Gamma_{n+1}$.
    Because $\Gamma_{n+1}$ is path--connected, up to homotopy, we can moreover assume that $\xi(N)=J_0$, where $N$ denotes the north pole of $\SSS^{2n+1}$. 
    
    Denote by $\AlmCont_{\xi_0}(\SSS^{2n+1})$ and $\AlmCont_\xi (\SSS^{2n+1})$ the path connected components of $\xi_0$ and $\xi$, respectively. 
    Consider the $\U(n+1)$--principal bundle  $p:\SO(2n+2)\rightarrow \Gamma_{n+1}$, $A\mapsto A\cdot J_0 \cdot A^{-1}$.
    By Bott periodicity, $\pi_{2n}(\U(n+1))=0$.
    In particular, the homomorphism
    \begin{equation*}
        \homgp{2n+1}{p}\co\homgp{2n+1}{\SO(2n+2)}\to \homgp{2n+1}{\Gamma_{n+1}}
    \end{equation*}
    is surjective, so that there exists a lift $\widehat{\xi}:\SSS^{2n+1}\rightarrow\SO(2n+2)$ of $\xi$ such that $\widehat{\xi}(N)=\Id$. 
    
    The desired homeomorphism is then given by
    \begin{equation*}
	    \begin{array}{rccc}
	    \Phi_{\hat{\xi}}\colon & \AlmCont_{\xi_0}(\SSS^{2n+1}) & \longrightarrow &  \AlmCont_{\xi}(\SSS^{2n+1}) \\
	    & \eta& \longmapsto & \widehat{\xi}\cdot\eta
	    \end{array}
    \end{equation*}
    where 
    \begin{align*}
        \widehat{\xi}\cdot\eta\co \SSS^{2n+1} &\to\Gamma_{n+1}\\ 
        z & \mapsto\hat{\xi}(z)\cdot\eta(z)
    \end{align*}
    is defined by using the left action of $\SO(2n+2)$ on $\Gamma_{n+1}$.
\end{proof}

\begin{prop}\label{prop:HomotopyAlmContSpheres}
		For each $k\in\NN$ there is an isomorphism  \begin{equation*}
		    \homgp{k}{\AlmCont(\SSS^{2n+1})}\cong\homgp{k}{\Gamma_{n+1}}\oplus\homgp{2n+k+1}{\Gamma_{n+1}}
		\end{equation*}
\end{prop}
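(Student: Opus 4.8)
The plan is to identify $\AlmCont(\SSS^{2n+1})$ with a free mapping space and then exploit the evaluation fibration, which carries an evident section given by constant maps. First, since $T\SSS^{2n+1}$ is stably trivial of type $1$ over $\RR$, \Cref{lem:AlmostContactFibrationTrivial} shows that the fiber bundle $\pi\co X\to\SSS^{2n+1}$ is trivial; fixing a trivialization identifies $\AlmCont(\SSS^{2n+1})$ with the space of (unbased) maps $\map(\SSS^{2n+1},\Gamma_{n+1})$, as already recorded above. By \Cref{lemma:PathComponentsHomeo} all path components of this space are homeomorphic, so it suffices to compute the homotopy groups of the path component of the constant map $c$ with value the standard complex structure $J_0\in\Gamma_{n+1}$.

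Next I would bring in the evaluation map at the north pole,
\[
    \evN\co\map(\SSS^{2n+1},\Gamma_{n+1})\longrightarrow\Gamma_{n+1},\qquad f\longmapsto f(N),
\]
which is a fibration (indeed $\{N\}\hookrightarrow\SSS^{2n+1}$ is a cofibration) whose fiber over $J_0$ is the based mapping space $\Mapstar{\SSS^{2n+1},\Gamma_{n+1}}\cong\Omega^{2n+1}\Gamma_{n+1}$. The key point is that $\evN$ admits a section, namely the map $s\co\Gamma_{n+1}\to\map(\SSS^{2n+1},\Gamma_{n+1})$ sending a point to the constant map with that value; since $\Gamma_{n+1}$ is path--connected, $s$ lands in the path component of $c$ and restricts there to a section of $\evN$.

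Consequently the long exact sequence in homotopy of this fibration splits into short exact sequences
\[
    0\to\homgp{k}{\Mapstar{\SSS^{2n+1},\Gamma_{n+1}}}\to\homgp{k}{\map(\SSS^{2n+1},\Gamma_{n+1})}\to\homgp{k}{\Gamma_{n+1}}\to 0
\]
that are split by $s_*$. For $k\geq2$ these are split short exact sequences of abelian groups, hence the middle term is the direct sum of the outer two; for $k=1$ one has $\homgp{1}{\Gamma_{n+1}}=0$ (by Bott periodicity, cf.\ \Cref{rmk:HomotopyGamma}), so the conclusion is immediate. Combining this with the adjunction identification $\homgp{k}{\Mapstar{\SSS^{2n+1},\Gamma_{n+1}}}\cong\homgp{k+2n+1}{\Gamma_{n+1}}$ yields the desired isomorphism.

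I do not anticipate a genuine obstacle: the only subtleties are the bookkeeping of path components of the generally disconnected mapping space — taken care of by \Cref{lemma:PathComponentsHomeo} — and, in the case $k=1$, the need for $\Gamma_{n+1}$ to be simply connected so that the split extension is an honest direct sum rather than merely a semidirect product.
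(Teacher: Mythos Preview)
Your argument is essentially the same as the paper's: both identify $\AlmCont(\SSS^{2n+1})$ with $\map(\SSS^{2n+1},\Gamma_{n+1})$, use \Cref{lemma:PathComponentsHomeo} to reduce to the component of the constant map, and then split the long exact sequence of the evaluation fibration $\evN$ via the section by constant maps, invoking the adjunction $\homgp{k}{\Omega^{2n+1}\Gamma_{n+1}}\cong\homgp{2n+k+1}{\Gamma_{n+1}}$. You are in fact slightly more careful than the paper at $k=1$, explicitly noting that the split extension is a direct sum only because $\homgp{1}{\Gamma_{n+1}}=0$.

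The one point you leave uncovered is $k=0$: once you restrict to a single path component, its $\pi_0$ is a point, so your short exact sequence no longer computes $\homgp{0}{\AlmCont(\SSS^{2n+1})}$. The paper handles this case separately, using that $\homgp{0}{\map(\SSS^{2n+1},\Gamma_{n+1})}=[\SSS^{2n+1},\Gamma_{n+1}]=\homgp{2n+1}{\Gamma_{n+1}}/\homgp{1}{\Gamma_{n+1}}=\homgp{2n+1}{\Gamma_{n+1}}$ since $\Gamma_{n+1}$ is simply connected (and $\homgp{0}{\Gamma_{n+1}}=0$). Alternatively, you could just run your fibration argument on the full mapping space rather than a single component: the same split short exact sequence at $k=0$ then reads $0\to\homgp{2n+1}{\Gamma_{n+1}}\to\homgp{0}{\map(\SSS^{2n+1},\Gamma_{n+1})}\to 0\to 0$.
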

\begin{proof}
	For $k=0$ we argue as follows.
	Recall that $[\SSS^n,X]=\homgp{n}{X,x}/\homgp{1}{X,x}$, for any pointed topological space $(X,x)$. 
	Hence, 
	\begin{equation*}
	\homgp{0}{\AlmCont(\SSS^{2n+1})}=[\SSS^{2n+1},\Gamma_{n+1}]=\homgp{2n+1}{\Gamma_{n+1}}/\homgp{1}{\Gamma_{n+1}}=\homgp{2n+1}{\Gamma_{n+1}} \text{ ,}
	\end{equation*}
	and the statement follows from the fact that, according to \Cref{rmk:HomotopyGamma}, $\Gamma_{n+1}=\SO(2n+2)/\U(n+1)$ is simply connected.
	
	We now prove the statement for $\pi_k$ with $k\geq 1$.
	According to \Cref{lemma:PathComponentsHomeo}, we can consider $\almcont{\SSS^{2n+1}}$ as space pointed at $\xi_0\equiv J_0:\SSS^{2n+1}\rightarrow\Gamma_{n+1}$.
	Similarly, we consider $\Gamma_{n+1}$ as space pointed at $J_0$.
	There is then a natural Serre fibration (of pointed spaces)
	\begin{align*}
	    \evN\co \AlmCont(\SSS^{2n+1}) &\to \Gamma_{n+1}\\
	    \xi\quad\quad &\mapsto\xi(N)
	\end{align*}
	\newline
	The fiber over $J_0$ is the space $F\coeq \AlmCont_{\xi(N)=J_0}(\SSS^{2n+1})$ of almost contact structures which evaluate at $J_0$ on the north pole,
	which is naturally considered as pointed at $\xi_0$.
	In particular, $F=\map((\SSS^{2n+1},N),(\Gamma_{n+1},J_0))$, so that $\pi_k(F)=\pi_{2n+k+1}(\Gamma_{2n+1})$.

	Moreover, the map
	\begin{align*}
	    s\co\Gamma_{n+1}&\rightarrow\AlmCont_{\xi_0}(\SSS^{2n+1})\\
	    J&\mapsto \quad \xi_J
	\end{align*}
	where $\xi_J\equiv J$, defines a section of the fibration. 
	In particular, the boundary map in the long exact sequence of homotopy groups associated to the Serre fibration $ev_N$ is trivial, and every obtained short exact sequence of groups splits.
	In other words, 
	\begin{equation*}
		    \homgp{k}{\AlmCont(\SSS^{2n+1})}\cong\homgp{k}{\Gamma_{n+1}}\oplus\homgp{k}{F} =\homgp{k}{\Gamma_{n+1}}\oplus \pi_{2n+k+1}(\Gamma_{2n+1})\text{ .}
		    \qedhere
	\end{equation*}
\end{proof}

\subsection{The overtwisted group}
\label{sec:OT_gp}

Let $M$ be a $(2n+1)$--dimensional manifold.
We denote in this section the subspaces of contact and almost contact structures on $M$ with a fixed overtwisted disk $\Delta_0\subset M$ respectively by $\contOT{M,\Delta_0}\subseteq\cont{M}$ and $\Almcont{M,\Delta_0}\subseteq\Almcont{M}$. 
\begin{thm}[\citet*{EliashbergOT,BEM}]
    \label{thm:OT_str}
    The following forgetful map induces a weak homotopy equivalence:
    \begin{equation*}
        \contOT{M,\Delta_0}\rightarrow\Almcont{M,\Delta_0},
    \end{equation*}
\end{thm}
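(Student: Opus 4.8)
The statement is an $h$-principle, so the plan is to establish \emph{parametric existence and uniqueness} of genuine contact structures among formal ones, relative to the region that is already overtwisted and genuine. Concretely, I would reduce the asserted weak homotopy equivalence to the following relative parametric statement (this is the standard reformulation of ``weak equivalence of an inclusion'' in terms of lifting squares): for every $m\geq 0$, any family of almost contact structures on $M$ parametrized by $\mathbb{D}^{m}$, all containing the fixed overtwisted disk $\Delta_0$ and genuine contact for parameters in $\partial\mathbb{D}^{m}$, can be homotoped — rel a neighborhood of $\Delta_0$, where every structure in sight is already the genuine overtwisted model, and rel $\partial\mathbb{D}^{m}$ — to a family of genuine contact structures. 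Since $\pi_m$ of the pair $(\Almcont{M,\Delta_0},\contOT{M,\Delta_0})$ is represented by such families with $m$ replaced by $m{+}1$, this relative parametric $h$-principle is exactly what makes the forgetful map $\contOT{M,\Delta_0}\to\Almcont{M,\Delta_0}$ an isomorphism on all homotopy groups and a bijection on $\pi_0$.

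For the relative statement I would use the two-step scheme typical of Gromov-style flexibility results, adapted to the contact category. Step one is \emph{localization}: fix a fine triangulation of $M$ (in the parametric case, a triangulation of $M\times\mathbb{D}^{m}$ compatible with the projection and with the data fixed along $\Delta_0$ and $\partial\mathbb{D}^{m}$) and try to promote the formal structure to a genuine one skeleton by skeleton, rel the part already genuine. Extending over cells of codimension $\geq 2$ is essentially unobstructed, and by a holonomic-approximation-type argument one can arrange the structure to be genuine contact on a neighborhood of the codimension-one skeleton; the entire content concentrates in the top-dimensional cells. Over each remaining ball one is left with a genuine contact germ near its boundary sphere together with a prescribed formal extension across the ball — precisely the datum of a \emph{contact shell} — and the question becomes whether such a shell is \emph{solid}, i.e.\ admits a genuine filling in the given formal class.

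Step two, and the real substance, is the key lemma: \textbf{every overtwisted shell is solid}. Here one introduces an explicit \emph{overtwisted model} — a (piecewise-smooth) genuine contact germ on $\mathbb{D}^{2n}\times\mathbb{R}$, the higher-dimensional analogue of Eliashberg's overtwisted disk — and calls a shell overtwisted if, after an isotopy, it contains a copy of this model. The mechanism for solidity is absorption: one shows that the shell connected sum of an \emph{arbitrary} contact shell with the standard overtwisted shell is always fillable, via explicit neighborhood/handle models and a delicate analysis of the boundary foliation data of the model that ``swallows'' the formal obstruction. To deploy this inside the skeleton induction one needs that overtwistedness \emph{propagates}: having fixed $\Delta_0$, one can finger a copy of the overtwisted model from $\Delta_0$ along the codimension-one skeleton, so that each top-cell shell encountered is overtwisted, hence solid. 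Everything must be carried out in families (the fillings depending continuously on parameters and agreeing with the given ones over $\partial\mathbb{D}^{m}$), which is why it is cleanest to phrase the model and its fillability parametrically from the outset.

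The main obstacle is exactly this key lemma — the solidity of overtwisted shells together with its parametric refinement — which is the technical heart of \cite{EliashbergOT} in dimension three and of \cite{BEM} in all dimensions; by comparison, the reduction to the relative parametric statement, the triangulation/skeleton induction, the holonomic-approximation step, and the propagation of overtwistedness are formal once that input is available. A secondary, purely bookkeeping, difficulty is threading the ``rel neighborhood of $\Delta_0$'' and ``rel $\partial\mathbb{D}^{m}$'' constraints consistently through the induction, which is handled by always working with germs along closed subsets and choosing triangulations subordinate to the fixed data.
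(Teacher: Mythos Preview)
The paper does not prove this theorem at all: it is quoted as a black box from \cite{EliashbergOT,BEM}, with no argument given. Your outline is essentially a sketch of the Borman--Eliashberg--Murphy proof (reduction to a relative parametric statement, skeleton induction reducing to contact shells, and the key lemma that overtwisted shells are solid, with overtwistedness propagated from the fixed disk $\Delta_0$), so there is nothing to compare against in the paper itself; your proposal correctly identifies where the actual content lies, namely in the solidity-of-overtwisted-shells lemma, which you rightly defer to the cited references rather than attempt to reprove.
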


Notice that the overtwisted disk is not allowed to move in this results.
However, an easy corollary is the fact that the forgetful map
\begin{equation}
    \label{eq:BEMPathComponents}
    \contOT{M}\rightarrow \Almcont{M}
\end{equation} 
induces a bijection at $\pi_0$--level, where $\contOT{M}$ denotes the space of overtwisted contact structures on $M$. 
This can be seen by introducing an overtwisted disk in a neighborhood of a (properly chosen) point of $M$, and using \Cref{thm:OT_str}.

To deal with the higher--order homotopy groups, one needs the existence of a continuous choice of overtwisted disks in order to run the same argument. 

\begin{definition}[\cite*{CPP}]
    \label{def:OTgroup}
	Let $0\leq k\leq 2n$.
	The \emph{overtwisted $k$--group} of $M$, denoted $\OTk(M)$, is the subgroup of $\pi_k(\contOT{M})$ made of those classes that admit a representative $\xi\co\SSS^k\rightarrow\contOT{M}$ for which there is a \emph{certificate of overtwistedness}, i.e.\ a continuous map 
	\begin{equation*}
	    \Delta \co \SSS^k \to \PLemb{\DD^{2n},M} \coeq \{\,\psi\co \DD^{2n} \hookrightarrow M \text{ piece--wise linear embedding}\,\}
	\end{equation*}
	such that, for each $p\in\SSS^k$, $\Delta(p)$ is overtwisted for $\xi(p)$.
\end{definition}
	Homotopy classes in $\OTk(M)$ are called \emph{overtwisted}. A homotopy class which is not overtwisted is called \emph{tight}.

In these terms, \Cref{eq:BEMPathComponents} says that the map $\ot{0}{M}\rightarrow\homgp{0}{\Almcont{M}}$ is a bijection.
For higher--order homotopy groups one then has the following:
\begin{prop}[{\citet*[Proposition 33]{CPP}}]
    \label{prop:OTGroup}
    Let $(M,\xiot)$ be any closed overtwisted contact manifold. For each $0\leq k\leq 2n$, the inclusion $\contOT{M}\hookrightarrow\Almcont{M}$ induces an isomorphism 
    \begin{equation*}
    \OTk(M)\xrightarrow{\sim}\homgp{k}{\Almcont{M}} .
    \end{equation*}
    Moreover, $\OTk(M)<\homgp{k}{\contOT{M},\xiot}=\homgp{k}{\cont{M},\xiot}$ is a normal subgroup for $k>0$ and, thus, the set of tight classes $\Tightk(M)=\homgp{k}{\cont{M},\xiot}/\OTk(M)$ has group structure. 
    In particular, for any $1\leq k\leq 2n$ there is an isomorphism 
    \begin{equation*}
        \homgp{k}{\cont{M},\xiot}\cong\OTk(M)\oplus\Tightk(M)\text{ .}
    \end{equation*}
\end{prop}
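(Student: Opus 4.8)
The plan is to deduce every assertion from the parametric flexibility result of \Cref{thm:OT_str}, whose only shortcoming for us is that it keeps the overtwisted disk $\Delta_0$ fixed, whereas a certificate of overtwistedness is exactly a \emph{continuously varying} family of overtwisted disks. So the first step is to promote \Cref{thm:OT_str} to an \emph{enhanced flexibility statement with moving disk}: for a finite CW--pair $(B,B')$, a family $\alpha\co B\to\Almcont{M}$, and a continuous family $\Delta\co B\to\PLemb{\DD^{2n},M}$ such that $\alpha(b)$ agrees near $\Delta(b)$ with the standard almost contact germ of an overtwisted PL disk for every $b$, the family $\alpha$ is homotopic --- through families keeping this germ datum, and relative to $B'$ whenever $\alpha$ already takes values in $\contOT{M}$ over $B'$ --- to a family valued in $\contOT{M}$ that is overtwisted along $\Delta$. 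I would obtain this by pulling back, along $\Delta$, the tautological families over the space of ``thickened PL overtwisted disks in $M$ together with their standard germ neighbourhoods'', whose fibrewise section spaces are $\contOT{M,\Delta_0}$ and $\Almcont{M,\Delta_0}$; since \Cref{thm:OT_str} makes the forgetful map between them a fibrewise weak equivalence, it is a weak equivalence on section spaces, and lifting the section cut out by $\alpha$ yields the statement.

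Granting this, here is how I would finish. The case $k=0$ is \Cref{eq:BEMPathComponents}, so assume $k\ge1$. For surjectivity of $\OTk(M)\to\homgp{k}{\Almcont{M}}$, take $\alpha\co\SSS^k\to\Almcont{M}$ and manufacture, after a homotopy, a certificate: over a CW--structure of $\SSS^k$, deform $\alpha$ cell by cell so that over each (contractible) cell it equals the standard overtwisted germ near a ball of $M$ carrying a fixed standard overtwisted PL disk, interpolating these disks through PL embeddings over the overlaps; then the enhanced flexibility statement turns $\alpha$ into a homotopic family valued in $\contOT{M}$ and overtwisted along this certificate, i.e.\ an element of $\OTk(M)$ mapping to $[\alpha]$. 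Injectivity is the same argument run relatively over $\SSS^k\times[0,1]$: given two families with certificates and a homotopy $H$ between them in $\Almcont{M}$, extend the certificate over $H$ rel the two ends and apply the relative enhanced flexibility to push $H$ rel ends into $\contOT{M}$. I expect this certificate construction to be the crux, and it is the only place the hypothesis $1\le k\le 2n$ is needed: implanting the standard model coherently over a $k$--parameter family uses the room available for sliding a $2n$--disk inside the $(2n+1)$--manifold $M$, which is enough precisely because the resulting obstruction is $\Gamma_{n+1}$--valued and $\Gamma_{n+1}$ is sufficiently connected in this range. The identity $\homgp{k}{\contOT{M},\xiot}=\homgp{k}{\cont{M},\xiot}$ is elementary: by Gray stability any contact structure in the path component of $\xiot$ inside $\cont{M}$ is contactomorphic to $\xiot$, hence overtwisted, so that component lies in --- and therefore coincides with --- the component of $\xiot$ inside $\contOT{M}$.

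It remains to treat the group structure. For $k\ge1$, $\OTk(M)$ is closed under the operations of $\homgp{k}{\contOT{M},\xiot}$: using the pinch co--$H$--structure of $\SSS^k$ and arranging certificates to be \emph{based}, i.e.\ to take a fixed value $\Delta_\star$, an overtwisted disk of $\xiot$, near the basepoint --- which is possible because the space of overtwisted disks of a fixed overtwisted contact structure is path connected --- the certificates of $[\alpha]$ and $[\beta]$ glue along the collapsed equator to a certificate for $[\alpha]\cdot[\beta]$, while precomposing with an orientation--reversing self--map of $\SSS^k$ provides one for $[\alpha]^{-1}$. Normality of $\OTk(M)$ in $\homgp{k}{\cont{M},\xiot}$ is automatic for $k\ge2$ by commutativity; the case $k=1$, which I expect to be the most delicate point, is handled by conjugating a based certificate of $[\alpha]$ along a lift of a loop representing $[\gamma]$ to the space of pairs $(\eta,\delta)$ with $\delta$ an overtwisted disk of $\eta\in\contOT{M}$, invoking once more the path connectedness of the fibres of this space over $\contOT{M}$. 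Finally, since $\OTk(M)\to\homgp{k}{\Almcont{M}}$ is an isomorphism, its inverse composed with the inclusion is a homomorphism splitting the forgetful map $f\co\homgp{k}{\cont{M},\xiot}\to\homgp{k}{\Almcont{M}}$; hence $\OTk(M)\cap\ker f=0$ and $\homgp{k}{\cont{M},\xiot}=\OTk(M)\cdot\ker f$, and because both $\OTk(M)$ and $\ker f$ are normal this product is direct, with $\ker f\cong\homgp{k}{\cont{M},\xiot}/\OTk(M)=\Tightk(M)$.
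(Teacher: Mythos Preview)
The paper does not give its own proof of this proposition: it is quoted as \citet*[Proposition~33]{CPP} and used as a black box, so there is nothing in the present paper to compare your attempt against.

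Your outline is in the spirit of what one expects the argument in \cite*{CPP} to be --- upgrade \Cref{thm:OT_str} to families with a moving overtwisted disk, use this to identify $\OTk(M)$ with $\homgp{k}{\Almcont{M}}$, and then split via the forgetful map --- and the final group-theoretic step (two normal subgroups with trivial intersection generating the whole group give an internal direct product) is clean. The parts that would need real work to stand as a proof are the ``enhanced flexibility with moving disk'' step and, especially, the coherent construction of certificates over $\SSS^k$: your appeal to ``room for sliding a $2n$--disk'' and to an ``obstruction being $\Gamma_{n+1}$--valued'' is suggestive but not an argument, and this is precisely where the bound $k\le 2n$ must enter. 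For a complete treatment you should consult \cite*{CPP} directly.
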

To the authors' knowledge, the only known example of a non--trivial tight class is contained in \cite{Vogel}, where the author exhibits an order $2$ loop of overtwisted contact structures on $\SSS^3$, based at the only overtwisted structure on $\SSS^3$ having Hopf invariant $-1$, which does not admit a certificate of overtwistedness. 
It follows that this tight loop cannot come from a loop of diffeomorphisms in the long exact sequence in \Cref{eq:ExactSequence}. 
In particular, its image via the boundary map is a non--trivial element (of order $2$) in the contact mapping class group.

\section{Proofs of the statements}
\label{sec:proofs}

We start by recalling some known facts in algebraic topology.
Recall the following standard homotopy equivalence (see for instance \citet*[Lemma 1.1.5]{ABK} for a proof):
\begin{equation}
    \label{eq:DiffSpheres}
    \diff{\SSS^{2n+1}}\xleftarrow{\sim}\diff{\DD^{2n+1},\partial}\times\SO(2n+2) \text{ .}
\end{equation}
Here, the group $\diff{\DD^{2n+1},\partial}$ of diffeomorphisms of the disk relative to its boundary which are smoothly isotopic to the identity is understood as the subgroup of $\diff{\SSS^{2n+1}}$ of diffeomorphisms which fixes (a neighborhood of) the north hemisphere, and the arrow is the natural inclusion map. 
Moreover, some of the rational homotopy groups of the first factor of the right--hand side of \Cref{eq:DiffSpheres} are completely characterized (see also \citet*[Section 6]{WW}):
\begin{thm}[\cite{FH}]
    \label{thm:RationalHomotopyDiffDisk}
	Let $0\leq k<\min\{\frac{2n-3}{3},n-3\}$. Then
		\begin{equation*}
            \homgp{k}{\diff{\DD^{2n+1},\partial}}\otimes\QQ=
            \begin{cases}
                0& \text{if $k\not\equiv 3\mod 4,$} \\
	            \QQ& \text{if $k\equiv 3\mod 4$.} 
	        \end{cases}
	\end{equation*}
\end{thm}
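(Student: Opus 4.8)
This is the Farrell--Hsiang computation, and the route I would follow converts it into a statement about the rational homotopy of algebraic $K$-theory by way of pseudoisotopy theory. The starting point is that, although the smooth Alexander trick fails, its block version does not: the space of \emph{block} diffeomorphisms $\widetilde{\Diff}(\DD^{2n+1},\partial)$ of the disk rel boundary is contractible. Hence the fibration
\begin{equation*}
\diff{\DD^{2n+1},\partial}\longrightarrow\widetilde{\Diff}(\DD^{2n+1},\partial)\longrightarrow\widetilde{\Diff}(\DD^{2n+1},\partial)\big/\diff{\DD^{2n+1},\partial}
\end{equation*}
identifies $\diff{\DD^{2n+1},\partial}$ with the loop space of the ``block structure space'' on the right. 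Smoothing theory --- Morlet's comparison theorem, equivalently the Burghelea--Lashof--Rothenberg picture or its Weiss--Williams reformulation --- then identifies the homotopy groups of that structure space, in a range of degrees growing linearly with $n$, with those of the stable pseudoisotopy space $\mathcal P(\ast)$ of a point (there are no characteristic-class corrections, $\DD^{2n+1}$ being contractible). Combining the loop shift with the deloopings recalled below, one obtains in the stated range an isomorphism $\homgp{k}{\diff{\DD^{2n+1},\partial}}\cong\pi_{k+2}\big(\mathbf{Wh}^{\mathrm{Diff}}(\ast)\big)$ with the homotopy of the smooth Whitehead spectrum of a point.

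Two inputs underlie that last sentence. First, the range: Igusa's concordance stability theorem makes the stabilisation maps on concordance spaces of disks $k$-connected once the dimension exceeds roughly $\max(2k,3k)$, and his two connectivity estimates yield the two constraints $k<\tfrac{2n-3}{3}$ and $k<n-3$ appearing in the hypothesis. Second, the delooping: Waldhausen's stable parametrised $h$-cobordism theorem realises $\mathcal P(\ast)$ as an iterated loop space of $\mathrm{Wh}^{\mathrm{Diff}}(\ast)=\Omega^\infty\mathbf{Wh}^{\mathrm{Diff}}(\ast)$, where $\mathbf{Wh}^{\mathrm{Diff}}(\ast)$ is the cofibre of the unit map $\mathbb{S}\to A(\ast)\simeq K(\mathbb{S})$ from the sphere spectrum into the algebraic $K$-theory of spaces of a point; the net degree shift works out to $2$, which is exactly what converts the residue class $1\bmod 4$ appearing in $K$-theory into the residue class $3\bmod 4$ of the statement. (For a general closed manifold $M$ there is also here a $\ZZ_2$-symmetry --- reversing the $I$-direction of an $h$-cobordism --- which makes the final pattern depend on $\dim M$; the disk rel boundary is insensitive to this, which is why only a condition on $k$ survives.)

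What remains, the rational homotopy $\pi_*\mathbf{Wh}^{\mathrm{Diff}}(\ast)\otimes\QQ$, is purely homotopy-theoretic. Rationally the linearisation $K(\mathbb{S})\to K(\ZZ)$ is an equivalence, the unit $\mathbb{S}\to K(\mathbb{S})$ is an isomorphism on $\pi_0$, and $\pi_j\mathbb{S}$ is torsion for $j\ge 1$; hence $\pi_j\big(\mathbf{Wh}^{\mathrm{Diff}}(\ast)\big)\otimes\QQ\cong K_j(\ZZ)\otimes\QQ$ for $j\ge 1$. Borel's theorem gives $K_j(\ZZ)\otimes\QQ=\QQ$ when $j\equiv 1\bmod 4$ with $j\ge 5$, and $0$ otherwise. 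Setting $j=k+2$: $\homgp{k}{\diff{\DD^{2n+1},\partial}}\otimes\QQ=\QQ$ precisely when $k\equiv 3\bmod 4$ (in which case automatically $k\ge 3$), and $0$ otherwise, which is the claim.

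I expect the genuinely delicate part of this plan to be the first paragraph: making the passage from honest diffeomorphisms of $\DD^{2n+1}$ rel boundary to pseudoisotopy spaces precise requires the full strength of smoothing theory and Morlet's lemma of disjunction, and one must track connectivities carefully enough that, combined with Igusa's bounds, exactly the stated range of $k$ is obtained. Everything downstream --- Waldhausen's delooping and Borel's computation --- is then essentially off the shelf. I should add that this is an anachronistic reorganisation: the original proof of Farrell and Hsiang predates both Igusa's stability theorem and the completed form of Waldhausen's theory, and instead proceeds through Hatcher's pseudoisotopy spectral sequence together with surgery-theoretic input and Borel's theorem --- the rational content being the same.
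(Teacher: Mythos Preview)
The paper does not supply a proof of this theorem at all: it is stated as a black-box citation of \cite{FH}, with a pointer to \cite{WW} for further discussion, and is then used as input in the proofs of Theorems~\ref{thm:FundamentalGroup} and~\ref{thm:pi3}. So there is no ``paper's own proof'' to compare your proposal against.

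That said, your sketch is a coherent and essentially correct outline of the modern route to this computation --- smoothing theory/Morlet to reduce to pseudoisotopy, Igusa's stability to get the range, Waldhausen's stable parametrised $h$-cobordism theorem to reach $K(\mathbb{S})\simeq A(\ast)$, and Borel's calculation of $K_*(\ZZ)\otimes\QQ$ to finish. You are also right to flag that this is an anachronistic repackaging: Farrell and Hsiang's original argument goes through Hatcher's pseudoisotopy analysis and surgery theory rather than Waldhausen $K$-theory, though the rational output is the same. For the purposes of this paper, however, none of this is needed; the result is simply quoted.
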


Let's now go back to contact topology and prove the statements announced in the introduction.
\begin{proof}[Proof (\Cref{thm:StableHomotopy})] 
    Let $\xiot$ be any overtwisted structure on $\SSS^{2n+1}$, and $k\in\NN$ such that $1\leq 4k+1\leq 2n-1$.
    The relevant part of the long exact sequence in  \Cref{eq:ExactSequence} is the following:
    \begin{equation*}
    \begin{tikzcd}
		\pi_{4k+2}(\diff{\SSS^{2n+1}}) \arrow[r] &
		\pi_{4k+2}(\cont{\SSS^{2n+1}}) \arrow[r] &
		\mathcal{K}^{2n+1}_{4k+1}
	\end{tikzcd}
    \end{equation*}
    
    According to Propositions \ref{prop:HomotopyAlmContSpheres} and \ref{prop:OTGroup}, there is an isomorphism 
    \begin{equation*}
        \homgp{4k+2}{\cont{\SSS^{2n+1},\xiot}}\cong\homgp{4k+2}{\Gamma_{n+1}}\oplus\homgp{2n+4k+3}{\Gamma_{n+1}}\oplus\Tightk(\SSS^{2n+1}).
    \end{equation*}
    Moreover, under this isomorphism, the projection on the first factor 
        \begin{equation*}
        \homgp{4k+2}{\cont{\SSS^{2n+1},\xiot}}\to\homgp{4k+2}{\Gamma_{n+1}}\text{ .}
    \end{equation*}
    is just the map induced by the evaluation at the north pole $ev_N$.
    As $\diff{\DD^{2n+1},\partial}\overset{i}{\subset}\diff{\SSS^{2n+1}}$ is the subgroup of diffeomorphisms fixing the north hemisphere, it follows that the following composition is trivial:
    \begin{equation*}
    \begin{tikzcd}
		\pi_{4k+2}(\diff{\DD^{2n+1},\partial}) \arrow[r, "\pi_{4k+2}(i)"] &
		\pi_{4k+2}(\diff{\SSS^{2n+1}}) 
		\arrow[dl, out=0, in=180]
		\\
		\pi_{4k+2}(\cont{\SSS^{2n+1}}) \arrow[r, "\pi_{4k+2}(ev_N)"] &
		\homgp{4k+2}{\Gamma_{n+1}}
	\end{tikzcd}
    \end{equation*}

    Moreover, according to Bott periodicity, $\homgp{4k+2}{\SO(2n+2)}=0$.
    In particular, the following composition is also trivial:
    \begin{equation*}
    \begin{tikzcd}
		\pi_{4k+2}(\diff{\SSS^{2n+1}}) \arrow[r] &
		\pi_{4k+2}(\cont{\SSS^{2n+1}}) \arrow[rr, "\pi_{4k+2}(ev_N)"] &
		&
		\homgp{4k+2}{\Gamma_{n+1}}
	\end{tikzcd}
    \end{equation*}

    Now, according to \Cref{rmk:HomotopyGamma}, $\homgp{4k+2}{\Gamma_{n+1}}$, hence $\homgp{4k+2}{\cont{\SSS^{2n+1}}}$, contains a subgroup $\ZZ$.
    It then follows from the exact sequence that $\mathcal{K}^{2n+1}_{4k+1}$ must have at least one element of infinite order, as desired.
\end{proof}

\begin{proof}[Proof (\Cref{thm:FundamentalGroup})]
According to \cite{HatcherSmale}, the Smale Conjecture holds for $\SSS^3$; in particular, $\homgp{2}{\diff{\SSS^3}}=0$. 
Moreover, since $\Gamma_2=\SO(4)/\U(2)=\SSS^2$ it follows from Propositions \ref{prop:HomotopyAlmContSpheres} and \ref{prop:OTGroup} that the group
\begin{equation*}
    \OT_2(\SSS^3)\cong\homgp{2}{\SSS^2}\oplus\homgp{5}{\SSS^2}\cong\ZZ\oplus\ZZ_2
\end{equation*}    
is a subgroup of $\homgp{2}{\cont{\SSS^3},\xiot}$. 
\Cref{item:1_thm2} then follows from the exact sequence in \Cref{eq:ExactSequence}.

\sloppy{Since $\homgp{2}{\SO(4n+1)}=\homgp{6}{\SO(4n+1)}=0$, \Cref{thm:RationalHomotopyDiffDisk} implies that $\homgp{2}{\diff{\SSS^{4n+1}}}\otimes\QQ=0$ for $n\geq 3$, and $\homgp{6}{\diff{\SSS^{4n+1}}}\otimes\QQ=0$	for $n\geq 6$.}
Moreover, according to \Cref{rmk:HomotopyGamma}, each of the following homotopy groups contain a cyclic subgroup: $\homgp{2}{\Gamma_{2n+1}}$ for $n\geq 1$, $\homgp{6}{\Gamma_{2n+1}}$ for $n\geq 2$, $\homgp{4n+3}{\Gamma_{2n+1}}$ and $\homgp{4n+7}{\Gamma_{2n+1}}$. 
    \Cref{item:2_thm2,item:3_thm2} then follow from the exact sequence in \Cref{eq:ExactSequence} and from Propositions \ref{prop:HomotopyAlmContSpheres} and \ref{prop:OTGroup}.
\end{proof}

\begin{proof}[Proof (\Cref{thm:pi3})]
	Since $\homgp{4}{\SO(4n+4)}$ is trivial, it follows from the identification in \Cref{eq:DiffSpheres} and from \Cref{thm:RationalHomotopyDiffDisk} that $\homgp{4}{\diff{\SSS^{4n+3}}}\otimes\QQ=0$ for $n\geq 4$. 
	Moreover, according to \Cref{rmk:HomotopyGamma}, $\homgp{4n+7}{\Gamma_{2n+2}}$ contains a subgroup $\ZZ$.
	
	Similarly, $\homgp{5}{\SO(8n+8)}=0$ thus \Cref{eq:DiffSpheres} and Theorem \ref{thm:RationalHomotopyDiffDisk} imply that $\homgp{5}{\diff{\SSS^{8n+7}}}\otimes\QQ=0$ for $n\geq 2$. 
	According to \Cref{rmk:HomotopyGamma}, $\homgp{8n+12}{\Gamma_{4n+4}}\cong\ZZ$.
	
	The statement then follow from the exact sequence in \Cref{eq:ExactSequence} and from Propositions \ref{prop:HomotopyAlmContSpheres} and \ref{prop:OTGroup}.
\end{proof}


\bibliographystyle{abbrvnat}
\bibliography{biblio}

\Addresses

\end{document}